\documentclass[10pt]{article}
\usepackage{amssymb, amsmath, amsthm, color,graphicx, amsfonts,a4}
\usepackage{amscd}

\definecolor{Red}{cmyk}{0,1,1,0}

\definecolor{verde}{cmyk}{1,0,1,0}

\definecolor{azul}{cmyk}{1,1,0,0}

%\renewcommand{\baselinestretch}{1.0}
%\loadeufmsub

%%%%%%%%%%%%%%%%%%%%%%%%%%%%%%%%%%%%%%%PAGINA%%%%%%%%%%%%%%%%%%%%%
\evensidemargin 0in \oddsidemargin 0in
\setlength{\headheight}{0cm} \setlength{\headsep}{0pt}
\setlength{\parindent}{0cm} \setlength{\textwidth}{6.5 in}
\setlength{\parskip}{.3cm}\setlength{\textheight}{23cm}
\setlength{\topmargin}{-1cm}\setlength{\linewidth}{10cm}
\setlength{\footskip}{1.5cm}
%\renewcommand{\baselinestretch}{1.5}
%\baselineskip=22pt
%%%%%%%%%%%%%%%%%%%%%%%%%%%%%%%%%%%%ESPECIAIS%%%%%%%%%%%%%%%%%%%%%
%\def\theequation{\thesection.\arabic{equation}}

\newcommand{\cima}[2]{\genfrac{}{}{0pt}{3}{#1}{#2}}
\numberwithin{equation}{section}

\def\cal{\mathcal}

\newcommand{\ra}{\rightarrow}

%%%%%%%%%%%%%%%%%%%%%%%%%%%%ABREVIACOES%%%%%%%%%%%%%%%%%%%%%%%%%%%

\def\Ed{{\mathbb{E}}}

\def\Zd{\mathbb{Z}^d}

\newcommand{\E}{\mathbb{E}}
\newcommand{\N}{\mathbb{N}}

\newcommand{\Z}{\mathbb{Z}}
\renewcommand{\P}{\mathbb{P}}

\newcommand{\V}{\mathbb{V}}
%%%%%%%%%%%%%%%%%%%%%%%%LETRAS GREGAS%%%%%%%%%%%%%%%%%%%%%%%%%%%%%%
\renewcommand{\a}{\alpha}

\newcommand{\g}{\gamma}

\newcommand{\e}{\varepsilon}

\newcommand{\s}{\sigma}
\renewcommand{\o}{\omega}

\renewcommand{\l}{\lambda}

\renewcommand{\L}{\Lambda}

\newcommand{\m}{\mu}

\newcommand{\be}{\begin{equation}}
\newcommand{\ee}{\end{equation}}

%%%%%%%%%%%%%%%%%%%%%%%%%%%%%%%%%TEOREMAS%%%%%%%%%%%%%%%%%%%%%%%%%%
\newtheorem{teorema}{Theorem}

\newtheorem{lema}{Lemma}
\newtheorem{corolario}{Corollary}

\begin{document}
\title{Percolation of words on $\Zd$ with long range connections}
\author{Bernardo N.B. de Lima\footnote{ Departamento de Matem{\'a}tica, Universidade Federal de Minas Gerais, Av. Ant\^onio
Carlos 6627 C.P. 702 CEP30123-970 Belo Horizonte-MG, Brazil} ,
R\'emy Sanchis$^{*}$, Roger W.C. Silva\footnote{ Departamento de
Estat\'\i stica, Universidade Federal de Minas Gerais, Av.
Ant\^onio Carlos 6627 C.P. 702 CEP30123-970 Belo Horizonte-MG,
Brazil}} \maketitle
\begin{abstract}
Consider an independent site percolation model on $\Z^d$, with
parameter $p \in (0,1)$, where all long range connections in the
axes directions are allowed. In this work we show that given any
parameter $p$, there exists and integer $K(p)$ such that all
binary sequences (words) $\xi \in \{0,1\}^{\N}$ can be seen
simultaneously, almost surely, even if all connections whose
length is bigger than $K(p)$ are suppressed. We also show some results concerning the question how
$K(p)$ should scale with $p$ when $p$ goes to zero. Related results are also
obtained for the question of whether or not almost all
words are seen.
\end{abstract}
{\footnotesize Keywords: percolation of words; truncation's
question \\
MSC numbers:  60K35, 82B41, 82B43}

\section{Introduction and Notation}
The problem of percolation of words was introduced in \cite{BK} and is formulated as follows. Let $G=(\V,\E)$ be a graph
with a countably infinite vertex set $\V$. Consider site
percolation on $G$; to each site $v \in \V$ we associate
a Bernoulli random variable $X(v)$, which takes the values 1 and 0
with probability $p$ and $1-p$ respectively. This can be done
considering the probability space $(\Omega,\mathcal{F},\P_p)$,
where $\Omega=\{0,1\}^{\V}$, $\mathcal{F}$ is the $\sigma$-algebra
generated by the cylinder sets in $\Omega$ and
$\P_p=\prod_{v\in\V}\mu(v)$ is the product of Bernoulli measures
with parameter $p$, in which the configurations $\{X(v),v \in
\mathbb{V}\}$ takes place. We denote a typical element of $\Omega$
by $\omega$ and sometimes we write $X(\omega,v)$ instead of $X(v)$
to indicate that $X(v)$ depends on the configuration. When
$X(v)=1\,\,(X(v)=0)$ we say that $v$ is ``occupied'' ($v$ is
``vacant'', respectively). A path $\gamma$ on $G$ is a sequence
$v_1,v_2,\dots$ of vertices in $\V$, such that $v_i\neq v_j,\
\forall\ i\neq j$ and $v_{i+1}$ is a nearest neighbor of $v_i$,
for all $i$; that is the edge $\langle v_i,v_{i+1}\rangle$ belongs
to $\E$.

Let $\Xi=\{0,1\}^\N$. A semi-infinite binary sequence $\xi
=(\xi_1,\xi_2,\dots)\in\Xi$ will be called a {\it word}. Given a
word $\xi\in\{0,1\}^\N$, a vertex $v\in\V$ and a configuration
$\omega\in\Omega$, we say that the word $\xi$ {\it is seen in the
configuration $\o$ from the vertex $v$} if there is a self
avoiding path $\langle v=v_0,v_1,v_2\dots\rangle$ such that
$X(v_i)=\xi_i$, $\forall i=1,2,\dots$. Note that the state of $v$
is irrelevant. For fixed $\o\in\Omega$ and $v\in\V$, we will consider
the random sets $S_v(\o)=\{\xi\in\Xi ;\xi \mbox{ is seen in } \o
\mbox{ from } v \}$ and $S_{\infty}(\o)=\cup_{v\in\V} S_v(\o)$.
An interesting problem is to describe in which circumstances the
events $\{\o\in\Omega; S_\infty(\o)=\Xi\}$ and $\{\o\in\Omega ;
\exists v\in\V\mbox{ with } S_v(\o)=\Xi\}$ occur almost surely. Whenever either one of these occur, we say that {\it all words are seen}.

From a different perspective, if one suppose that the sequence of
digits in the word $\xi$ is a sequence of independent Bernoulli
random variables with parameter $\alpha$, i.e. each word $\xi$
take its values in the probability space $(\Xi,{\cal
A},\mu_\alpha)$, where ${\cal A}$ is the $\s$-algebra generated by
the cylinder sets in $\Xi$, and $\mu_{\alpha}=\prod_{n\in\N} \m(n)
$ is the product of Bernoulli measures with parameter $\alpha$,
another questions arise, namely 
whether the event
$$\{\omega\in\Omega ; \mu_\alpha
(S_\infty(\omega))=1\}$$
occurs almost surely. Whenever this occurs, we say that {\it almost all words are seen } or that {\it the random word percolates}.

In general, the problem of seeing all words is significantly harder
than the one of seeing almost all words. For instance, it is known that for $d\geq 3$ and $p=1/2$, almost all words are seen on $\Z^d$ with nearest neighbours whereas, in \cite{BK}, it is shown that it is possible to see all words on $\Z^d$,
a.s. for $d\geq 10$, but for $d<10$ this question
remains open (see Theorem 1 and Open Question 2 in
\cite{BK}). One should remark that, in general, seeing almost all words does not imply that all word are seen. For instance, Theorem 5 of  \cite{BK} gives
an example of a tree where we can see $\mu_\frac{1}{2}$-almost all
words but not all words are seen, $\P_{\frac{1}{2}}-$a.s.

In \cite{KSZ1}, it is shown that $\mu_\alpha$-almost all words are seen (with
$\alpha\in (0,1)$) on the triangular lattice,
$\P_{\frac{1}{2}}-a.s.$ (remember that in the triangular lattice
$p_c=\frac{1}{2}$, so it is not possible to see all words). In \cite{KSZ2}, it is proved that on the closed packed graph of
$\Z^2$ for $p\in (1-p_c(\Z^2), p_c(\Z^2))$ all words are seen
$\P_p-a.s.$

In the present paper we are concerned with the graph $G_K=(\V ,\E_K )$, in which $\V= \Z^d,
d\geq 2$ and where all long-range edges parallel to the coordinate axes are allowed, that is
\[\E_K=\{\langle (x,y)\rangle\subset \Z^d\times\Z^d :
\exists i\in\{1,\dots,d\}\mbox{ such that }\]
\[ 0<|x_i -y_i|\leq K \mbox{ and
} x_j=y_j,\forall j\neq i\}.\]

The graph $G_K$ can be seen as a truncation of a non-locally finite graph $G=(\V,\E)$, where
\[\E=\{\langle (x_1,\dots,x_d)(y_1,\dots,y_d)\rangle\subset \Z^d\times\Z^d :
\exists i\in\{1,\dots,d\}\mbox{ such that }\]
\[x_i\neq y_i \mbox{ and
} x_j=y_j,\forall j\neq i\},\] that is, $G_K$ can be obtained from $G$ by 
erasing all bonds whose length is larger than $K$.

In a previous paper by one of us (see \cite{L}), it is shown that for all $p\in (0,1)$ there exists a positive
integer $K=K(p)$ such that, on the graph $G_K$, 
\begin{equation}\label{boletim}
\P_p\left(\cup_{v\in\V}\{\o\in\Omega ;\xi\mbox{ is seen in
}\o\mbox{ from }v\mbox{ on }G_K\}\right)=1, \ \forall\xi\in \Xi.
\end{equation}

Moreover, (\ref{boletim}) implies that for the same $K(p)$ it is
possible to see $\mu_\alpha$-almost all words (with $\alpha\in
(0,1)$) on $G_K$, but (\ref{boletim}) does not imply that it is
possible to see all words.

Concerning related questions, we would like to single out the paper \cite{GLR}, where similar questions are considered,  but in $\Z$ instead of $\Z^d,\ d\geq2$. One of the results proved therein is that when $K=2$, not all words are seen $\P_{\frac{1}{2}}$-a.s.

In Section \ref{todas} we prove that there is a constant $K(p)$ such that, with positive probability, all words are seen on $G_K$ from a given vertex and state some results on the scaling of the constant $K(p)$ as $p\searrow 0$. In Section \ref{quasetodas}
we state a result on the scaling of the constant $K(p)$ for which $\mu_\alpha$-almost all words are seen on $G_K$. In Section \ref{sfinal}, we make some final remarks concerning the scaling behaviour for ordinary percolation and state some conjectures and open questions.

\section{All words can be seen}\label{todas}

This first result generalizes that of \cite{L}, showing that {\it all} words are seen on $G_K$ for sufficiently large $K$.

\begin{teorema}\label{box}
For all $p\in (0,1)$, there exists a positive integer $K=K(p)$,
such that
$$\P_p\{\omega\in\Omega; S_0(\omega)=\Xi\mbox{ on }G_K\}>0.$$
Equivalently
\be\label{seen}\P_p\left(\bigcup_{v\in\V}\{\omega\in\Omega; S_v(\omega)=\Xi\mbox{ on }G_K\}\right)=1.\ee
\end{teorema}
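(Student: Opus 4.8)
The plan is to reduce to $d=2$ and to look for word-realizing paths only among ``forward'' paths. The subgraph of $G_K$ induced by the plane $\{x\in\Z^d:x_3=\dots=x_d=0\}$ is a copy of $\Z^2$ carrying all axis edges of length at most $K$; being a subgraph of $G_K$, any word seen from the origin there is seen from the origin in $G_K$, so I may assume $d=2$. A path all of whose steps are of the form $v\mapsto v+te_i$ with $1\le t\le K$ and $i\in\{1,2\}$ has $x_1+x_2$ strictly increasing, hence is automatically self-avoiding, and from every vertex it has exactly $2K$ admissible continuations. Thus it suffices to exhibit, with positive probability, a way of realizing \emph{every} word from the origin by such forward steps.

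For this I would renormalize. Partition the first quadrant into the disjoint $K\times K$ boxes $\mathrm{Core}(\mathbf m)=K\mathbf m+\{0,\dots,K-1\}^2$, $\mathbf m\in\Z_{\ge 0}^2$, and call $\mathrm{Core}(\mathbf m)$ \emph{good} if every axis-aligned segment of $\theta_K$ consecutive sites contained in it contains at least one occupied and at least one vacant site, where $\theta_K\to\infty$ with $\theta_K\le K/2$ (say $\theta_K=\lceil\sqrt K\rceil$). Since distinct cores are disjoint, the events $\{\mathrm{Core}(\mathbf m)\text{ good}\}$ are independent, and a union bound over the at most $2K^2$ relevant segments gives $\P_p(\mathrm{Core}(\mathbf m)\text{ not good})\le 4K^2\big(\max(p,1-p)\big)^{\theta_K}$, which tends to $0$ as $K\to\infty$. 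Recalling that oriented site percolation on $\Z^2$ with admissible steps $+e_1$ and $+e_2$ is supercritical once its parameter is close enough to $1$, it follows that for $K=K(p)$ large the good cores percolate, so with positive probability $\mathrm{Core}(\mathbf 0)$ is \emph{$\infty$-good}: the initial core of an infinite sequence $\mathrm{Core}(\mathbf m_0)=\mathrm{Core}(\mathbf 0),\mathrm{Core}(\mathbf m_1),\dots$ of good cores with $\mathbf m_{k+1}\in\{\mathbf m_k+e_1,\mathbf m_k+e_2\}$ for every $k$.

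The crux is to show that on the event ``$\mathrm{Core}(\mathbf 0)$ is $\infty$-good'' every word is seen from the origin. Build the path for a word $\xi$ step by step, maintaining the invariant that after realizing $\xi_1,\dots,\xi_{i-1}$ the current vertex $v$ lies in an $\infty$-good core $\mathrm{Core}(\mathbf m)$. The $2K$ forward neighbours of $v$ split into at most four axis-aligned segments of total length $2K$: a horizontal and a vertical one inside $\mathrm{Core}(\mathbf m)$, a horizontal one inside $\mathrm{Core}(\mathbf m+e_1)$, and a vertical one inside $\mathrm{Core}(\mathbf m+e_2)$. If one of the two segments inside $\mathrm{Core}(\mathbf m)$ has length at least $\theta_K$, move along it to a site whose state is $\xi_i$ (such a site exists by goodness of $\mathrm{Core}(\mathbf m)$), staying in the same $\infty$-good core. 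Otherwise $v$ lies within distance $\theta_K$ of the far corner of its core, so \emph{both} the segment inside $\mathrm{Core}(\mathbf m+e_1)$ and the one inside $\mathrm{Core}(\mathbf m+e_2)$ have length $>K/2\ge\theta_K$; since $\mathrm{Core}(\mathbf m)$ is $\infty$-good, at least one of these two cores is $\infty$-good, and moving along the long segment lying in it reaches a site of state $\xi_i$ while entering that $\infty$-good core. In either case the invariant is restored; and since $x_1+x_2$ strictly increases, the path leaves $\mathrm{Core}(\mathbf m)$ after finitely many steps, necessarily by a step of the second kind, hence into an $\infty$-good core. So the procedure never halts and produces a self-avoiding path realizing $\xi$. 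As $\xi$ was arbitrary, $\P_p\{S_0=\Xi\text{ on }G_K\}\ge\P_p(\mathrm{Core}(\mathbf 0)\text{ is }\infty\text{-good})>0$. The equivalence with \eqref{seen} then follows in the standard way, since $\bigcup_{v\in\V}\{S_v=\Xi\}$ is translation invariant and $\P_p$ is ergodic.

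I expect the routing step to be the main difficulty. A naive greedy reading of a word---jumping at each step to an arbitrary forward neighbour with the required state---fails almost surely: the reachable set is essentially the whole quadrant, yet some axis window of length $K$ is eventually monochromatic, so some word gets stuck. The renormalization circumvents this because ``good'' is a local, non-recursive condition that holds with probability close to $1$ when $K$ is large, while the oriented-percolation structure of the good cores provides, in advance, an infinite reservoir of cores into which the bit-by-bit reading can always be steered.
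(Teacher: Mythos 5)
Your proof is correct and follows essentially the same strategy as the paper's: partition $\Z^d$ (or a coordinate plane) into boxes of side comparable to $K$, call a box good when its axis-aligned lines contain both states, observe that good boxes percolate on the renormalized lattice for $K$ large, and steer the reading of an arbitrary word along an infinite cluster of good boxes. Your variations --- restricting to $d=2$, using oriented percolation so that monotonicity of $x_1+x_2$ gives self-avoidance for free, and checking goodness on windows of length $\lceil\sqrt K\rceil$ so that several digits are read per box --- are sound but do not change the substance of the argument, which in the paper is run with unoriented site percolation and one digit read per good box.
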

\begin{proof}
For any given $n\in\N$ and $x=(x_1,\dots,x_d)\in\Z^d$, let
\be\label{particao}
\L_x(n)=\{y=(y_1,\dots,y_d)\in\Z^d; 0\leq y_i-nx_i\leq
n-1,\forall\ i=1,\dots ,d\}
\ee
be a hypercubic box of side $n$. We
observe that for any $n\in\N$, the set of boxes $\{\L_x(n);
x\in\Z^d\}$ is a partition of $\Z^d$.

Consider a renormalized lattice, isomorphic to $\Z^d$, whose sites
are the boxes $\{\L_x(n); x\in\Z^d\}$. Given a configuration
$\o\in\Omega$, we declare each box as  ``good'', in the
configuration $\o$, if all lines have at least one occupied site
and one vacant site. To be precise, the box $\L_x(n)$ will be
``good'' if, for all $i\in\{1,\dots,d\}$ and for all finite
sequence $(l_j)_j$ with $l_j\in\{1,\dots ,n\}$ and
$j\in\{1,\dots,d\}-\{i\}$ there exists $z,w\in L(i,(l_j)_j)$ such
that $X(\o,z)=1$ and $X(\o,w)=0$, where
$$L(i,(l_j)_j)=\{y\in\L_x(n);y_j=l_j,\forall
j\in\{1,\dots,d\}-\{i\}\}$$ are the lines of $\L_x(n)$.

Consider the events $$A_x(n)=\{\o\in\Omega;\mbox{  the box
}\L_x(n)\mbox{ is good in }\o\}.$$ It is clear that all events of
the collection $\{A_x(n);x\in\Z^d\}$ are independent and have the
same probability. A rough estimate for a lower bound of this
probability gives

\begin{equation}
\label{goodbox} \P_p(A(n))=1-\P_p(A(n)^c )\geq
1-dn^{d-1}(p^n+(1-p)^n).
\end{equation}

Then,$$\lim_{n\rightarrow\infty}\P_p(A(n))=1,\ \forall
p\in(0,1).$$

Now, for $p\in(0,1)$ fixed, let
$N=N(p)=\min\{n\in\N;\P_p(A_n)>p_c(\Z^d) \}$, where $p_c(\Z^d)$ is
the ordinary independent nearest neighbour site percolation threshold for $\Z^d$. Then the origin of the
renormalized lattice will percolate with strictly positive
probability, that is, there is an infinite path
$(\Lambda_{x_0}(N),\Lambda_{x_1}(N),\Lambda_{x_2}(N),\dots)$ of
renormalized ``good'' sites, with
$x_k=(x_{k,1},\dots,x_{k,d})\in\Z^d,\|x_{k+1}-x_k\|_1=1,\forall
k\in\N$, and $x_0=(0,\dots,0)$. From now on, we fix some
configuration $\o\in\Omega$ for which this infinite path
$(\Lambda_{x_0}(N),\Lambda_{x_1}(N),\Lambda_{x_2}(N),\dots)$ of
renormalized ``good'' sites occurs.

Given any word $\xi=(\xi_1,\xi_2,\cdots)\in\Xi$, we can see its
digits along some path $\gamma =\langle v_0=0,v_1,v_2\dots\rangle$
starting from the origin of the original lattice in the following
way.

Define $v_0$ as being the origin, we will define the others
vertices inductively. Given the vertex
$v_{k-1}\in\L_{x_{k-1}}(N)$, let $i_k\in\{1,\dots,d\}$ be the
unique integer such that $|x_{k-1,i_k}-x_{k,i_k}|=1$. Since the box
$\L_{x_k}(N)$ is good there exists at least one vertex
$v\in\L_{x_k}(N)$ along the line $L(i_k,(l_j)_j)$ with
$l_j=v_{k-1,j},\forall j\neq i_k$ such that $X(v)=\xi_k$. Choose
one of these vertices and call it by $v_k$. Observe that $v_{k-1}$
and $v_k$ belong to the same line and $\|v_{k-1}-v_k\|_1\leq 2N-1,
\forall k\in\N$.

Then by construction, on this fixed configuration $\o$, we have $\xi_k=X(\o,v_k),$
$\forall k\in\N$. So, taking
$K(p)=2N(p)-1$ we have that
$$\P_p\{\omega\in\Omega; S_0(\omega)=\Xi\mbox{ on }G_K\}>0.$$

The last statement of the theorem follows by observing that
the event $\cup_{v\in\V}\{\omega\in\Omega;
S_v(\omega)=\Xi\mbox{ on }G_K\}$ is translation invariant, so its
probability must be 0 or 1.

\end{proof}

A natural question one could ask is about the magnitude of $K(p)$. The truncation constant $K(p)$ has its minimum at $p=\frac{1}{2}$
(when $d=2$, the constant $K(\frac{1}{2})$ could be taken as 11)
and increases to infinity as $p$ approaches $0$ or $1$. One
problem of relevance is to determine how $K(p)$ scales as $p$ goes
to zero. Without loss of generality (by symmetry) we consider
only the situation where $p\in (0,\frac{1}{2}]$.

Related problems on other models have been extensively studied,
for example, in \cite{AL}, the authors determine the right finite
size scaling as $p$ goes to zero for the critical threshold $2D$
Bootstrap Percolation. This is the setup of the next theorem and lemmas.

\begin{lema}\label{super}
 If $K=K(p)=2\lfloor\frac{\l}{p}\rfloor$, then for $\l>-3\ln (1-p_c(\Z^d))$ it holds that
\be \lim_{p\ra
0}\P_p\left(\cup_{v\in\V}\{\omega\in\Omega; S_v(\omega)=\Xi\mbox{ on }G_K\}\right)=1.\ee
\end{lema}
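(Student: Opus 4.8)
My plan is to refine the renormalization used in the proof of Theorem~\ref{box}. That argument is wasteful exactly at (\ref{goodbox}): calling a box of side $n$ good forces control of all $\asymp dn^{d-1}$ of its lines, so the union bound breaks down as soon as the per-line probability $(1-p)^n$ stops being small — which is precisely the regime $n\asymp\l/p$ relevant here. I would therefore replace the hypercubic box by a \emph{one-dimensional} renormalized cell, so that ``goodness'' is a statement about a single line and no union bound over lines is incurred.

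Concretely, fix $\l>-3\ln(1-p_c(\Zd))$, put $n=n(p)=\lfloor\l/p\rfloor$ (so $K=2n$), and partition $\Zd$ into the columns
\[ C_x=\{(x_1,\dots,x_{d-1},y)\in\Zd:\ nx_d\le y<n(x_d+1)\},\qquad x\in\Zd, \]
viewed as a renormalized copy of $\Zd$, with $C_x$ and $C_{x'}$ neighbours iff $\|x-x'\|_1=1$ (columns differing in a coordinate $i<d$ occupy neighbouring positions at the same heights, and columns differing in the $d$-th coordinate are translates of one another by $n$ along the $d$-th axis). Declare $C_x$ \emph{good} if its central third contains at least one occupied and at least one vacant site. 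Since distinct columns are disjoint, the events $\{C_x\ \text{good}\}$ are i.i.d., and
\[ \P_p(C_0\ \text{good})=1-(1-p)^{\lfloor n/3\rfloor}-p^{\lfloor n/3\rfloor}\ \xrightarrow[\ p\to 0\ ]{}\ 1-e^{-\l/3}>p_c(\Zd), \]
the last inequality being precisely the hypothesis on $\l$. Hence for every $p$ small enough the renormalized percolation is supercritical, and I would work on the almost surely existing infinite cluster of good columns.

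It remains to see an arbitrary word $\xi\in\Xi$ by travelling along that cluster: in each column visited one uses the occupied (resp.\ vacant) site of its central third when the current digit is $1$ (resp.\ $0$), so that a bounded number of digits of $\xi$ is consumed per column. The value $K=2n$ is what makes the moves feasible. A renormalized step between $C_x$ and $C_{x\pm e_d}$ is carried out by a single axial step along the $d$-th axis, of length at most $n+\lfloor n/3\rfloor<2n=K$, landing on the prescribed state inside the central third of the next column. A renormalized step between $C_x$ and $C_{x\pm e_i}$ with $i<d$ forces the site one enters to lie at the height one leaves from, so it must be realised by first manoeuvring, inside the current column, to a convenient common height and then stepping across; here one uses that for $p$ small the central thirds of two neighbouring good columns typically realise, jointly, every pattern of $\{0,1\}^2$ except $(1,1)$, so that such a crossing can be inserted at any position of $\xi$ which is not inside an occurrence of the block $11$.

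I expect this routing to be the main obstacle. One must arrange the traversal so that it is self-avoiding as a whole, so that ``horizontal'' transitions of the renormalized cluster are attempted only where $\xi$ permits them, and so that the two extremal families of words are also covered: a word eventually equal to $0$ is immediate, since once $p<1-p_c(\Zd)$ the vacant sites already percolate in $G_1\subseteq G_K$; a word eventually equal to $1$ is handled by a parallel column-renormalization argument applied to the set of occupied sites. Granting the routing, for all $p$ small enough $\P_p\{\omega\in\Omega:S_0(\omega)=\Xi\text{ on }G_K\}>0$, and, exactly as in Theorem~\ref{box}, since $\bigcup_{v\in\V}\{S_v(\omega)=\Xi\text{ on }G_K\}$ is translation invariant, its probability equals $1$ for all such $p$; letting $p\to0$ gives the statement.
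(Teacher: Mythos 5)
Your renormalization reproduces the right numerics (the condition $1-e^{-\l/3}>p_c(\Zd)$ is exactly the threshold the paper uses), but the notion of a ``good'' column is too weak to read an arbitrary word, and the routing step you yourself flag as ``the main obstacle'' is a genuine gap rather than a technicality. The problem is words containing arbitrarily long \emph{finite} runs of $1$'s, not merely words eventually equal to $1$. A good column guarantees only one occupied site in its central third, so a run of $m$ consecutive $1$'s forces you through $m$ distinct columns, taking one occupied site from each; since neighbouring columns in a direction $i<d$ communicate only through horizontal unit edges at a common height, and you yourself observe that the joint pattern $(1,1)$ at a common height of two neighbouring columns is typically \emph{not} realised for small $p$, such a run can only be consumed by moving in the $d$-th direction. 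But the infinite cluster of good columns is a fixed random object: you cannot force it to contain a vertical run of $m$ good columns starting wherever the word demands one, and a single configuration must serve all words simultaneously (the event is $S_0(\omega)=\Xi$, not a union over $\xi$). Your two patches (inserting horizontal crossings only outside occurrences of the block $11$, and a separate argument for words eventually constant) cover only special families of words and do not close this.

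The paper resolves exactly this difficulty with a stronger local structure: a \emph{seed}, i.e.\ an occupied site all of whose $\|\cdot\|_1$-neighbours are vacant. A seed supplies a $1$ and adjacent $0$'s simultaneously, so a chain of seeds whose centres lie pairwise on common axis lines with consecutive centres at distance less than $2n$ lets one read \emph{every} word along the same chain: land on a centre to read a $1$, on a vacant neighbour of a centre to read a $0$. The probability of finding a seed among $\lfloor n/3\rfloor$ well-separated candidate positions on a line is at least $1-[1-p(1-p)^{2d}]^{\lfloor n/3\rfloor}\to 1-e^{-\l/3}$, which is why your constant coincides with the paper's. To keep consecutive seeds aligned on common lines, the paper runs a dynamic, Grimmett--Marstrand-type exploration $(D_i,E_i)$ in which the set $T_{x_i}$ of candidate seed locations in a newly examined box is chosen on the line through the seed already found in the neighbouring box; the resulting $0$--$1$ process dominates supercritical i.i.d.\ site percolation, and on the event that the explored cluster is infinite every word is seen from the origin. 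To salvage your column picture you would need to upgrade ``goodness'' to something seed-like and build the cross-cell alignment into the exploration itself, at which point you have essentially rediscovered the paper's argument.
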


\begin{proof}By translation invariance, it is enough to prove that there exists some $p^*>0$, such that for large $\l$ and for all $ p\in(0,p^*)$, 
\be\label{final}
\P_p\{\omega\in\Omega; S_0(\omega)=\Xi\mbox{ on }G_K\}>0.
\ee

We say that there is a {\it seed} at vertex $v\in\Z^d$, if $X(v)=1$ and $X(u)=0, \forall u$ with $\|v-u\|_1=1$. We call the vertex $v$ the {\it center} of the seed. Observe that $\P_p\{\mbox{there is a seed located at }v\}=(1-p)^{2d}p$ and the events $\{\mbox{there is a seed located at }v_1\}$ and $\{\mbox{there is a seed located at }v_2\}$ are independent if $\|v_1-v_2\|_1\geq 3$.

As $\lim_{p\ra 0}[1-[1-p(1-p)^{2d}]^{\lfloor\frac{1}{3}\lfloor\frac{\l}{p}\rfloor\rfloor}]=1-\exp(-\frac{\l}{3})$, we can choose some large $\l>-3\ln (1-p_c(\Z^d))$ and some small $p^*$, such that
\be\label{cotainferior}
1-[1-p(1-p)^{2d}]^{\lfloor\frac{1}{3}\lfloor\frac{\l}{p}\rfloor\rfloor}>\gamma>p_c(\Z^d),\ \forall p\in(0,p^*).
\ee
Fixed this large $\l$ and $ p\in(0,p^*)$, define $n=\lfloor\frac{\l}{p}\rfloor$ and consider the partition of $\Z^d$, $\{\L_x(n);
x\in\Z^d\}$ as defined in (\ref{particao}). We will use the letters $x$ and $y$ to denote vertices of the renormalized lattice. The idea is to construct, dynamically, a sequence $(R_x, x\in\Z^d)$, of $\{0,1\}$ valued random variables and a sequence $(D_i,E_i), i=0,1,\dots$, of ordered pairs of subsets of $\Z^d$, defined as follows:

First of all, let $f:\N\ra\Z^d$ be a fixed ordering of the vertices of $\Z^d$ and define $(D_0,E_0)=(\emptyset,\emptyset)$. Let $x_0=0$ be the origin of $\Z^d$. 	We say that $R_{x_0}=1$ if at least one of the $d(n-1)$ vertices in the set 
\[T_{x_0}=\{v=(v_1,\dots,v_d)\in\Z^d; \exists i\in\{1,\dots,d\}\mbox{ with }v_i\in\{1,\dots,n-1\}\]
\[\mbox{ and } v_j=0,\forall j\neq i\}\] is the center of some seed, that is if $\exists v\in T_{x_0}$ with $X(v)=1$ and $X(u)=0, \forall u$ with $\|v-u\|_1=1$. Otherwise, we say that $R_{x_0}=0$. Observe that $$\P_p(R_{x_0}=1)\geq 1-[1-p(1-p)^{2d}]^{\lfloor\frac{d(n-1)}{3}\rfloor}\geq 1-[1-p(1-p)^{2d}]^{\lfloor\frac{n}{3}\rfloor}.$$ Now, define

\begin{equation}
(D_1,E_1)=\left\{
\begin{array}
[c]{l}
(D_0\cup\{x_0\},E_0) \mbox{  if } R_{x_0}=1,\\
(D_0,E_0\cup\{x_0\}) \mbox{  if } R_{x_0}=0,
\end{array}\right.
\end{equation}
and if $R_{x_0}=1$ define $z(x_0)$ as the center of some seed belonging to $T_{x_0}$.

Let $(D_i,E_i)$ be given. If $\partial_e(D_i)\cap E_i^c=\emptyset$, define $(D_j,E_j)=(D_i,E_i),\ \forall j>i$, where $$\partial_e(A)=\{v\in\Z^d; v\in A^c\mbox{ and }\exists u\in A\mbox{ with }\|v-u\|_1=1\}.$$ Otherwise, let $x_{i}$ be the first vertex in the fixed order belonging to $\partial_e(D_i)\cap E_i^c$ and define $y_{i}$ as any vertex belonging to $D_i$ such that $\|x_{i}-y_{i}\|_1=1$ (observe that $y_1=x_0$).

We say that $R_{x_{i}}=1$, if at least one of the $n$ vertices of the set
$$T_{x_{i}}=\L_{x_{i}}\cap\{z(y_{i})+ j\bar{e}_{x_{i}-y_{i}};j\in\Z\}$$ is the center of some seed, that is if $\exists v\in T_{x_{i}}$ with $X(v)=1$ and $X(u)=0, \forall u$ with $\|v-u\|_1=1$. Here, $\bar{e}_l$ denotes the unit vector of $\Z^d$ in the $l$-th direction. Otherwise, we say that $R_{x_{i}}=0$. Observe that $\P_p(R_{x_{i}}=1|R_{x_j},\ \forall j< i)\geq 1-[1-p(1-p)^{2d}]^{\lfloor\frac{n}{3}\rfloor}$.
Define

\begin{equation}
(D_{i+1},E_{i+1})=\left\{
\begin{array}
[c]{l}
(D_i\cup\{x_i\},E_i)\mbox{  if } R_{x_i}=1,\\
(D_i,E_i\cup\{x_i\})\mbox{  if } R_{x_i}=0,
\end{array}\right.
\end{equation}
and if $R_{x_i}=1$ define $z(x_i)$ as some center of seed belonging to $T_{x_i}$.
Due our choice of $\l, p$ and $n$, the process $(R_x, x\in\Z^d)$ dominates an i.i.d. $\{0-1\}$ valued process with parameter large than $p_c(\Z^d)$. Comparison with ordinary site percolation shows that (see Lemma 1 in \cite{GM}) $\P_p(\#(\cup_{i\in\N}D_i)=\infty)>0$ and by construction, on the event $(\#(\cup_{i\in\N}D_i)=\infty)$, all words $\xi\in\Xi$ can be seen along some self-avoiding path $\langle 0,v_1,v_2,\dots\rangle$ with $v_i$ belonging to some seed $\forall i$, as we will now show. Then, (\ref{final}) is proved with $K(p)=2\lfloor\frac{\l}{p}\rfloor$.

When the event $\{\#(\cup_{i\in\N}D_i)=\infty\}$ occurs, it is possible to take a sequence of adjacent boxes $\L_{x_{i_0}}, \L_{x_{i_1}}, \L_{x_{i_2}},\dots$, with $x_{i_0}=x_0=0$, such that $R(x_{i_j})=1,\forall j$ and 
$z(x_{i_j})-z(x_{i_{j-1}})=m\bar{e}_l$ for some $m\in\Z^*$ and $l\in\{1,\dots,d\}$. That is, seeds in adjacent boxes have their centers belonging to the same line. To simplify the notation, let us denote $x_{i_j}$ by $w_j$.

Given any word $\xi\in\Xi$, define $l_1=\min\{i;\xi_i=1\}$ and $l_j=\min\{i>l_{j-1};\xi_i=1\}$ for $j\geq 2$. If $l_1=1$, define $v_1=z(w_0)$; if $l_1>1$, define $v_i=z(w_{i-1})-\bar{e}_b,\forall i<l_1$ and $v_{l_1}=z(w_{l_1-2})$, where $b$ is the unique direction such that the inner product $\langle\bar{e}_b\cdot z(w_0)\rangle$ is not zero. Then, by construction, the finite word $(\xi_1,\dots,\xi_{l_1})$ is seen along the path $\langle 0,v_1,\dots,v_{l_1}\rangle$. Define $I(1)$ as the index such that $v_{l_1}=z(w_{I(1)})$ (observe that $I(1)=0$ if $l_1=1$ and $I(1)=l_1-2$ if $l_1\geq 2$).

Now, we describe the induction step. Suppose that the finite word \linebreak
$(\xi_1,\dots,\xi_{l_k})$ is seen along the path $\langle 0,v_1,\dots,v_{l_k}\rangle,\ \forall k\geq 1$.
If $l_{k+1}=l_k+1$, define $v_{l_{k+1}}=z(w_{I_k+1})$; if $l_{k+1}>l_k+1$, define $v_i=z(w_{I(k)+i-l_k})-\bar{e}_{w_{I(k)}-w_{I(k)+1}},\forall l_k<i<l_{k+1}$ and $v_{l_{k+1}}=z(w_{I(k)+l_{k+1}-1+l_k})$. Then, by construction, the finite word $(\xi_1,\dots,\xi_{l_{k+1}})$ is seen along the path $\langle 0,v_1,\dots,v_{l_{k+1}}\rangle$. Define $I(k+1)$ as the index such that $v_{l_{k+1}}=z(w_{I(k+1)})$ (observe that $I(k+1)=I(k)+1$ if $l_{k+1}=l_k+1$ and $I(k+1)=I(k)+l_{k+1}-l_k-1$ if $l_{k+1}>l_k+1$).

Thus, we define the path $\langle 0,v_1,v_2,\dots\rangle$, in such way that $X(v_i)=\xi_i, \forall i$. This finishes the proof of the lemma.

\end{proof}

\begin{lema}\label{sub} If $K=K(p)=\lfloor\frac{\l}{p}\rfloor$ with
$\l<\frac{1}{2d}$, it holds that \be \lim_{p\ra
0}\P_p\left(\cup_{v\in\V}\{\omega\in\Omega; S_v(\omega)=\Xi\mbox{
on }G_K\}\right)=0.\ee
\end{lema}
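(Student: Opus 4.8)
The plan is to show that when $K = K(p) = \lfloor \lambda/p \rfloor$ with $\lambda < \frac{1}{2d}$, there are simply too few occupied sites within reach for any fixed vertex $v$ to see even a single specific ``bad'' word --- in particular the word $\xi = (1,1,1,\dots)$ consisting only of ones. Concretely, seeing the all-ones word from $v$ on $G_K$ requires an infinite self-avoiding path $\langle v=v_0, v_1, v_2, \dots \rangle$ in $G_K$ with $X(v_i) = 1$ for all $i \geq 1$; that is, an infinite occupied self-avoiding path in $G_K$ emanating from a neighbour of $v$. So it suffices to show that $\lim_{p\to 0}\P_p(\text{there exists an infinite occupied self-avoiding path in }G_K) = 0$, since $\{\omega : \exists v, S_v(\omega) = \Xi\} \subseteq \{\omega : \text{an infinite occupied path exists in } G_K\}$, and then invoke the $0$--$1$ law already used in Theorem \ref{box} (the event is translation invariant).

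The key step is a first-moment / Peierls-type count. In $G_K = (\Z^d, \E_K)$ each vertex has exactly $2dK$ neighbours, so the number of self-avoiding paths of length $m$ starting at the origin is at most $(2dK)^m$. Each such path, having $m+1$ distinct vertices, is occupied with probability $p^{m+1} \leq p^m$. Hence the expected number of occupied self-avoiding paths of length $m$ from the origin is at most $(2dKp)^m$. Now $2dKp = 2d \lfloor \lambda/p\rfloor\, p \leq 2d\lambda < 1$ by the hypothesis $\lambda < \frac{1}{2d}$. Therefore
\be\label{firstmoment}
\P_p(\text{origin lies on an occupied S.A. path of length }m) \leq (2d\lambda)^m \xrightarrow{m\to\infty} 0,
\ee
uniformly in $p$, and hence the probability that the origin lies on an \emph{infinite} occupied self-avoiding path is $0$ for every $p$ with $p < p^*$ for a suitable $p^*$ (one needs $p$ small enough only to guarantee $\lfloor \lambda/p\rfloor \geq 1$, i.e. $p \leq \lambda$; for such $p$ the bound $2dKp < 1$ holds outright). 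A union over the countably many starting vertices $v$, together with translation invariance, upgrades this to $\P_p(\exists\,\text{infinite occupied S.A. path in }G_K) = 0$ for all small $p$, which gives the claim --- in fact the limit in the statement is attained identically for all sufficiently small $p$, not merely in the limit.

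The main (minor) obstacle is purely bookkeeping: one must be careful that the relevant event --- ``some word is unseeable'' --- is correctly reduced to the existence of an infinite occupied path, and that the degree bound $2dK$ for $G_K$ is used rather than the $2d$ of the nearest-neighbour lattice. There is no genuine probabilistic difficulty here: unlike Lemma \ref{super}, which needed a delicate renormalization and a domination argument, the subcritical direction follows from the crude observation that a branching-number-type quantity $2dKp$ stays below $1$. One could, if desired, sharpen the argument by using the connective constant of $\Z^d$ in place of the trivial bound $2dK$ on the number of self-avoiding paths, but this is unnecessary for the stated range $\lambda < \frac{1}{2d}$.
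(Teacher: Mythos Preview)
Your argument is correct and is essentially identical to the paper's own proof: both reduce the question to the non-percolation of the all-ones word $(1,1,\dots)$ via a first-moment (Peierls) count, bounding the number of self-avoiding paths of length $m$ in $G_K$ by $(2dK)^m$ and using $2dKp\le 2d\lambda<1$. The only cosmetic difference is that the paper works directly with $\P_p(\bar 1\in S_0)$ rather than phrasing it as ``an infinite occupied path exists,'' and it does not bother with the uniformity-in-$p$ remark or the caveat about $\lfloor\lambda/p\rfloor\ge 1$ (which is in any case unnecessary, since $K=0$ makes the statement trivial).
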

\begin{proof}For the subcritical behavior, with a standard
argument we show that for $\l<(2d)^{-1}$ the word $\bar
1=(1,1,...)$ does not percolate. Let $\sigma_m^K$ be the number of
self-avoiding paths of length $m$ starting from the origin on the
graph $G_K$, and let $M_m^K$ be the number of such paths which are
occupied. It is clear that, if we see the word $\bar{1}$ from the
origin, then there are occupied paths of all lengths starting from
the origin. This implies that, $\forall\, m\in\mathbb{N},$

\[ \P_p\{\o\in\Omega;\bar{1}\in S_0(\o)\mbox{ on
}G_{K}\}\leq\P_p\{\o\in\Omega;M_m^K(\o)\geq1\mbox{ on }G_{K}\}\leq
\]
\[ \Ed(M_m^K)=p^m\sigma_m^K\leq(p2dK)^{m}. \]

This last inequality follows from the fact that, in order
to have a self-avoiding path, each new step has at most $2dK$
choices. Therefore,
$$\P_p\{\o\in\Omega;\bar{1}\in S_0(\o)\mbox{ on
}G_{K}\}\leq \lim_{m\rightarrow\infty} (p2dK)^{m}.$$ Thus, if
$K<\frac{1}{p2d}$, it holds that $$\P_p\{\o\in\Omega;\bar{1}\in
S_0(\o)\mbox{ on }G_{K}\}=0,$$ that is,
$$\P_p\left(\cup_{v\in\V}\{\omega\in\Omega; S_v(\omega)=\Xi\mbox{
on }G_K\}\right)=0.$$
\end{proof}

\begin{teorema}\label{fss}There exists a constant $\l_0\in\left(\frac{1}{2d},-6\ln (1-p_c(\Z^d))\right)$ such that if $K(p)=\lfloor\frac{\l}{p}\rfloor$, it holds that
\begin{equation}
\lim_{p\ra 0}\P_p\left(\{\omega\in\Omega; S_v(\omega)=\Xi\mbox{  on }G_K\}\right)=\left\{
\begin{array}
[c]{l}
0 \mbox{  if } \l<\l_0,\,\\
1 \mbox{  if } \l>\l_0.
\end{array}
\right.
\end{equation}
\end{teorema}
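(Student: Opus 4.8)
The plan is to establish the existence of a sharp threshold $\l_0$ by a supremum/infimum argument, using Lemmas \ref{super} and \ref{sub} to pin down its location inside the stated interval. Concretely, I would first observe that monotonicity in $\l$ is the structural backbone: for fixed $p$, the graph $G_K$ with $K=\lfloor\l/p\rfloor$ has more edges when $\l$ is larger, so the event $\{S_v(\omega)=\Xi\text{ on }G_K\}$ is increasing in $\l$ (more precisely, its probability is nondecreasing in $\l$ since enlarging $K$ can only create new self-avoiding paths, never destroy existing ones). Hence the function $\l\mapsto\limsup_{p\to 0}\P_p(S_0(\omega)=\Xi\text{ on }G_{\lfloor\l/p\rfloor})$ is nondecreasing, and likewise for the $\liminf$. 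This lets me define
\be
\l_0=\sup\{\l>0:\ \liminf_{p\to 0}\P_p(S_0(\omega)=\Xi\text{ on }G_{\lfloor\l/p\rfloor})=0\}.
\ee

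Next I would use the two lemmas to bound $\l_0$. Lemma \ref{sub} shows that for $\l<\frac{1}{2d}$ the limit is $0$ (in fact the probability is eventually exactly $0$), so $\l_0\geq\frac{1}{2d}$. For the upper bound I invoke Lemma \ref{super}: there, with $K(p)=2\lfloor\l'/p\rfloor$ and $\l'>-3\ln(1-p_c(\Z^d))$, the limiting probability is $1$. Since $2\lfloor\l'/p\rfloor\le\lfloor 2\l'/p\rfloor$ (up to a harmless $\pm1$ that does not affect the $p\to0$ limit, which I would absorb by noting $K(p)=\lfloor\l/p\rfloor\ge 2\lfloor\l'/p\rfloor$ whenever $\l>2\l'$ and $p$ is small), the monotonicity just established gives that the limit is $1$ for every $\l>2\l'$, i.e. for every $\l>-6\ln(1-p_c(\Z^d))$. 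Therefore $\l_0\le-6\ln(1-p_c(\Z^d))$, and combining, $\l_0\in\bigl(\frac{1}{2d},-6\ln(1-p_c(\Z^d))\bigr)$ — strictness at the lower end coming from the fact that Lemma \ref{sub}'s conclusion is stated for the open condition $\l<\frac{1}{2d}$, and at the upper end from the strict inequality in Lemma \ref{super}.

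Finally, to convert the definition of $\l_0$ as a supremum into the stated dichotomy, I would argue: for $\l>\l_0$, pick $\l_0<\l'<\l$; by definition of the supremum, $\liminf_{p\to0}\P_p(\cdots)>0$ at parameter $\l'$ (otherwise $\l'$ would be in the set, contradicting $\l'>\l_0$), and then a $0$-$1$-law / translation-invariance boost — exactly the renormalization-and-percolation mechanism used in the proofs of Theorem \ref{box} and Lemma \ref{super} — upgrades this positive liminf to $\liminf=1$; monotonicity in $\l$ then pushes it to $1$ at $\l$ as well, and since it is a probability, $\lim_{p\to0}=1$. Conversely for $\l<\l_0$, by definition there is some $\l'\in(\l,\l_0)$ with $\liminf_{p\to0}\P_p(\cdots)=0$ at $\l'$; but in fact I would want the stronger statement that the liminf is $0$ for ALL $\l'<\l_0$, which follows because the set in the supremum is a down-set (an interval $(0,\l_0)$ or $(0,\l_0]$) by monotonicity. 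Then monotonicity again gives liminf $=0$ at $\l$; and to get the full limit $=0$ rather than just liminf, one uses that the first-moment bound in Lemma \ref{sub}'s style actually shows the probability is monotone and the "bad" regime is robust — alternatively, one notes the event is decreasing in $p$ in the relevant scaling window. The main obstacle I anticipate is precisely this last point: justifying that a positive-probability statement self-improves to probability one (the "$0$ or positive $\Rightarrow$ $1$" step), and that the threshold is genuinely sharp — i.e. that the liminf cannot sit strictly between $0$ and $1$ on a set of positive $\l$-measure. The first is handled by the ergodicity/translation-invariance argument already in hand; the second is really a statement that the sharp-threshold phenomenon holds, which here is essentially built into the renormalization comparison with Bernoulli percolation at a parameter crossing $p_c(\Z^d)$, since the renormalized density is a continuous increasing function of $\l$ in the $p\to0$ limit (as the explicit expression $1-\exp(-\l/3)$ in Lemma \ref{super}, and its analogue for the lower bound, makes visible).
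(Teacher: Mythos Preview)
Your approach uses the same three ingredients as the paper's proof---monotonicity in $\l$, the $0$--$1$ law from translation invariance, and Lemmas \ref{super} and \ref{sub}---but you make it considerably harder than necessary. The paper's entire argument is two lines: since the event $\cup_{v\in\V}\{S_v(\omega)=\Xi\text{ on }G_K\}$ is translation invariant, its $\P_p$-probability is $0$ or $1$ for \emph{every} fixed $p$ and $K$; combined with monotonicity in $K$ (hence in $\l$), this already forces a threshold structure, and the two lemmas place it in the stated interval. Your detour through $\liminf$'s, a supremum definition of $\l_0$, and the ``positive $\Rightarrow 1$'' upgrade via renormalization is superfluous once the $0$--$1$ law is invoked at the outset rather than at the end.

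You do put your finger on a genuine subtlety that the paper simply glosses over: monotonicity plus the $0$--$1$ law do not by themselves exclude that $pK^*(p)$ oscillates as $p\to 0$, leaving an interval of $\l$ where the limit fails to exist (i.e.\ $\l_0^-<\l_0^+$). The paper's proof does not address this, and neither do your proposed patches---the first-moment bound of Lemma \ref{sub} only bites for $\l<\frac{1}{2d}$, the event is not monotone in $p$ (seeing all words requires both $0$'s and $1$'s), and the continuity of the renormalized density $1-e^{-\l/3}$ from Lemma \ref{super} gives only an upper bound on the threshold, not sharpness. So on this point your argument is no worse than the paper's, but the specific justifications you offer for closing the gap are not sound.
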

\begin{proof}Observe that $\P_p\left(\{\omega\in\Omega; S_v(\omega)=\Xi\mbox{  on }G_K\}\right)$ is increasing in $\l$ and must be 0 or 1 by translation invariance. Therefore, this theorem follows by Lemmas \ref{super} and \ref{sub}. 
\end{proof}

Observe that in Lemma \ref{super} we made a more involved construction than in Theorem \ref{box}. The reason is that the right scale for $K(p)$ is different if we consider the event percolation of good boxes, like is shown in the next theorem.

\begin{teorema}  Let $A_0(n)=\{\o\in\Omega;\mbox{the box}\;\L_0(n)\mbox{ is good in }\o\}.$ Then, for $n=n(p)=\lfloor\frac{-\beta\ln p}{p}\rfloor$, we have that

\be\label{escala} \displaystyle\lim_{p\ra 0}\P_p\left(
A_0(n)\right)=
\begin{cases}1 \mbox{ if }\beta >d-1,\\ 0 \mbox{ if } \beta \leq
d-1.
\end{cases}
\ee
\end{teorema}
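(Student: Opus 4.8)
The plan is to analyze $\P_p(A_0(n))$ directly via a union bound and a matching second-moment-type lower estimate, using the explicit structure of "good" boxes. Recall that $\L_0(n)$ is good iff every axis-parallel line inside it contains at least one occupied and at least one vacant site. For a fixed coordinate direction $i$, the number of lines in direction $i$ is $n^{d-1}$, each line has $n$ sites, and a given line fails to be good (in the sense of missing an occupied site or missing a vacant site) with probability $p^n + (1-p)^n - (\text{overlap})$, which for small $p$ is $(1-p)^n(1+o(1))$ since $p^n$ is negligible. All lines in a given direction are disjoint (hence independent), and there are $d$ directions, so the relevant quantity controlling the probability is $d\,n^{d-1}(1-p)^n$.

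The first step is the upper bound (the $\beta \le d-1$ case, showing the limit is $0$). Restrict attention to lines in a single fixed direction, say direction $1$; these $n^{d-1}$ lines are independent. For a single line $L$, $\P_p(L \text{ is missing a vacant site}) = p^n$, which for $n = \lfloor -\beta \ln p / p\rfloor$ behaves like $\exp(n\ln p) \to 0$ very fast, so this is irrelevant; the binding constraint is $\P_p(L\text{ is missing an occupied site}) = (1-p)^n$. Now $(1-p)^n = \exp(n\ln(1-p)) = \exp(-np(1+O(p))) = \exp(\beta \ln p (1+o(1))) = p^{\beta(1+o(1))}$. Hence the expected number of lines in direction $1$ with no occupied site is $n^{d-1}(1-p)^n \approx (-\beta\ln p/p)^{d-1} p^{\beta} = (-\beta\ln p)^{d-1} p^{\beta - (d-1)}(1+o(1))$. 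If $\beta < d-1$ this tends to infinity, and by a second-moment argument (the events "line $L$ has no occupied site," indexed by the disjoint lines $L$, are independent, so the count is a sum of independent Bernoullis with diverging mean) the count is positive with probability tending to $1$; hence $\P_p(A_0(n)) \to 0$. For the boundary case $\beta = d-1$ one must be slightly more careful with the logarithmic factor: the mean is $(-\beta \ln p)^{d-1}(1+o(1)) \to \infty$, so the same independence/second-moment argument still forces a bad line to exist with probability $\to 1$, giving limit $0$.

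The second step is the lower bound (the $\beta > d-1$ case, showing the limit is $1$). Here I take a union bound over all lines in all $d$ directions:
\[
\P_p(A_0(n)^c) \le \sum_{i=1}^d \sum_{L \text{ line in dir }i} \big(\P_p(L\text{ has no occupied site}) + \P_p(L\text{ has no vacant site})\big) = d\,n^{d-1}\big((1-p)^n + p^n\big).
\]
As above, $p^n$ is super-polynomially small and negligible, while $n^{d-1}(1-p)^n \approx (-\beta\ln p)^{d-1} p^{\beta-(d-1)}(1+o(1)) \to 0$ when $\beta > d-1$. Therefore $\P_p(A_0(n)) \to 1$. This direction is essentially the estimate already used in (\ref{goodbox}), just carried out with the precise choice of $n$.

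The main obstacle — really the only delicate point — is getting the asymptotics of $(1-p)^n$ sharp enough, i.e. controlling the error in $n \ln(1-p) = -np - np^2/2 - \cdots$ against the scale $-\beta \ln p$. One needs $np^2 = \beta p |\ln p| \to 0$, which holds, so $n\ln(1-p) = -np(1+o(1)) = \beta \ln p\,(1+o(1))$, and the floor function in $n = \lfloor -\beta\ln p/p\rfloor$ only changes $np$ by $O(p) = o(1)$, hence is harmless. Once this is in hand, the polynomial-versus-logarithmic bookkeeping $(-\beta\ln p)^{d-1} p^{\beta - (d-1)}$ decides each case, with equality $\beta = d-1$ falling on the "limit $0$" side because the surviving logarithmic factor $(-\beta\ln p)^{d-1}$ diverges. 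I would present the upper (divergence) bound via independence of parallel lines and the lower bound via the union bound, matching the heuristic that the threshold sits exactly at $\beta = d-1$.
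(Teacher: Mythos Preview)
Your proof is correct and follows essentially the same route as the paper: reduce to lines in a single fixed direction (which are disjoint, hence independent), identify the governing quantity $n^{d-1}(1-p)^n \sim (-\beta\ln p)^{d-1}p^{\beta-(d-1)}$, and split on whether it tends to $0$ or $\infty$. The paper streamlines your $\beta\le d-1$ case by simply computing $\P_p(\text{every direction-}1\text{ line has an occupied site})=[1-(1-p)^n]^{n^{d-1}}$ exactly---since the lines are independent this is immediate, so your second-moment argument is unnecessary---and then uses FKG to pass from one direction to all $d$; otherwise the arguments coincide.
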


\begin{proof}
For $i\in\{1,\dots,d\}$ define the events
\[C_0^i(n)=\{\o\in\Omega :\forall (l_j)_j \mbox{ with }
l_j\in\{1,\dots ,n\} \mbox{ and } j\in\{1,\dots,d\}-\{i\}\]
\[\mbox{there exists }z\in L(i,(l_j)_j)\mbox{  such that }X(\o,z)=1\}\]
and $$B_0(n)=\cap_{i=1}^d C_0^i(n),$$ where
$$L(i,(l_j)_j)=\{y\in\L_0(n);y_j=l_j,\forall
j\in\{1,\dots,d\}-\{i\}\}$$ are the lines of $\L_0(n)$.

By definition of $A_0(n)$ and  $B_0(n)$, we have that
\be\label{tira0}\displaystyle\lim_{p\ra 0}\P_p\left(
B_0(n)\backslash A_0(n)\right)=0.\ee For all $i\in\{1,\dots,d\}$, $C_0^i(n)$
are increasing events, so by the FKG inequality and rotational
invariance we have that \be\label{FKG}[\P_p(C_0^1(n))]^d\leq
P_p(B_0(n))\leq P_p(C_0^1(n)).\ee

Thus, using (\ref{tira0}) and (\ref{FKG}) it is enough to prove
(\ref{escala}) replacing the event $A_0(n)$ by $C_0^1(n)$.

Observe that
$$\P_p\left(C_0^1(n)\right)= [1-(1-p)^n]^{n^{d-1}}.$$

Then, when $n=n(p)=\lfloor\frac{-\beta\ln p}{p}\rfloor$ we have
$$\lim_{p\ra 0}\P_p\left( C_0^1(n)\right)=\lim_{p\ra 0}[1-(1-p)^{\frac{-\beta\ln p}{p}}]
^{({\frac{-\beta\ln p}{p}})^{d-1}}$$ 
$$=\lim_{p\ra 0}\exp [-(-\beta\ln
p)^{d-1}p^{\beta-(d-1)}]=
\begin{cases}1, \mbox{ if }\beta >d-1,\\ 0, \mbox{ if } \beta \leq
d-1
\end{cases}$$
\end{proof}

%{\it Remark 1:} It is known from \cite{K} that $p_c(\Z^d)\approx (2d)^{-1}$, thus, $1/2\leq d\l_0(d)\leq 1$ for large $d$.

%and, since the upper bound obtained in (\ref{upper}) is $C\approx 2p_c(\Z^d)$, we have proved that $(2d)^{-1}<\l_0 (d)<d^{-1}$ for large $d$.

%{\it Remark 1:} The statement of Theorem \ref{fss} remains the same replacing the event $\cup_{v\in\V}\{\omega\in\Omega;S_v(\omega)=\Xi\mbox{ on }G_K\} \mbox{ by } \{\omega\in\Omega;S_\infty(\omega)=\Xi\mbox{ on }G_K\},$ but the constant $\l_0$ should be different.

%{\it Remark 3:} For a fixed $p>0$, consider the box $\L (L)$, where $L$ is some (fixed) multiple of $K(p)$, say $mK(p)$ and consider percolation on the graph $G_K\cap\L (L)$. Then, with the result of Theorem \ref{fss}, we could have some finite size estimates. Something like

%$$\P_p\{\omega\in\Omega; S_0(\omega)=\{0,1\}^m\mbox{ on }G_K\cap\L_L\}>a>0$$

%uniformly in $p$. This could be of some interest if we perform simulations to decide weather the random word percolates on a graph or not.

{\bf Remarks:}
\begin{description}
\item {i)} All results of this section remain valid if we
consider any finite alphabet instead
of the binary alphabet, that is  $\Xi=\{0,1,\dots, n-1\}^\N$.
\item{ii)} The statement of Theorem \ref{fss} remains the same replacing the event $\cup_{v\in\V}\{\omega\in\Omega;S_v(\omega)=\Xi\mbox{ on }G_K\}$ by $\{\omega\in\Omega;S_\infty(\omega)=\Xi\mbox{ on }G_K\}$.

\end{description}
Nevertheless, in both cases, the constant $\l_0$ should be different.

\section{Percolation of random words}\label{quasetodas}

Now we consider the same kind of scaling question, but concerning
the probability 

$$\P_p\left(\cup_{v\in\V}\{\omega \in \Omega;\mu_\a(S_v(\omega))=1)\mbox{ on }G_K\}\right),$$
i.e., the probability that almost all words are seen on $G_K$ from one vertex.

We aim at proving an analogue of Theorem
\ref{fss}. Observe that when
$\alpha =0$ we have ordinary percolation of 0's, and so the constant
$K$ can be taken equal to 1. When $\alpha=1$ the right scale of
$K(p)$ is the same as in Theorem \ref{fss} (see Corollary \ref{111} in the final remarks).  We are not yat able to determine the right scale, actually,
we don't even know if the scale itself changes (as the next
theorem might suggest) or if only the constant $\l_0$ would change, but
we can give an lower bound.

\begin{teorema}\label{fss2}
Given $0<\a<1$, we have that for all $\epsilon>0$ and
$K(p)<\frac{1}{p^{\a-\epsilon}}$, it holds that
$$\displaystyle\lim_{p\ra 0}
\P_p\left(\cup_{v\in\V}\{\omega \in \Omega;\mu_\a(S_v(\omega))=1)\mbox{ on }G_K\}\right)=0.$$
\end{teorema}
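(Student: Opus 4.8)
The plan is to mimic the first-moment (subcritical) argument of Lemma~\ref{sub}, but now exploiting the fact that a $\mu_\a$-typical word is \emph{sparse in $1$'s} (or $0$'s) over long stretches, so that along any fixed self-avoiding path on $G_K$ the word is seen only if the path manages to hit occupied sites at a set of prescribed (typically widely spaced) positions. Fix a configuration $\o$ and a vertex $v$; the event $\mu_\a(S_v(\o))=1$ forces in particular that $\mu_\a(S_v(\o))>0$, hence that $S_v(\o)$ contains at least one word with arbitrarily long prefixes of a convenient type. Concretely, for a large integer $m$ to be chosen, consider the finite prefixes $\x_{[1,m]}\in\{0,1\}^m$: if $\mu_\a(S_v(\o))=1$ then for \emph{every} prefix $u\in\{0,1\}^m$ with $\mu_\a$-positive cylinder, some word extending $u$ is seen from $v$, so in particular there is an occupied self-avoiding path of length $m$ from $v$ realizing $u$. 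I would pick $u=u(m)$ to be a word of length $m$ containing roughly $\a m$ ones, but the key point is to count paths realizing a \emph{fixed} such prefix.

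The main estimate: let $\sigma_m^{K}(u)$ denote the number of self-avoiding paths of length $m$ from the origin on $G_K$ whose occupation pattern would have to equal $u$ (i.e.\ the vertex at step $i$ is counted as ``occupied'' iff $u_i=1$). Then
\be
\P_p\{\exists\text{ occupied SA path of length }m\text{ from }0\text{ realizing }u\text{ on }G_K\}\le p^{\,\#\{i:u_i=1\}}\,(2dK)^{m},
\ee
since each of the $m$ steps has at most $2dK$ choices and the sites that must be occupied are distinct. Taking $u$ with $\#\{i:u_i=1\}\ge(\a-\e')m$ for a small $\e'$, this bound is at most $\big(p^{\a-\e'}(2dK)\big)^{m}$. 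With $K(p)<p^{-(\a-\e)}$ and $\e'<\e$ chosen small enough, the base $p^{\a-\e'}\cdot 2d\cdot p^{-(\a-\e)}=2d\,p^{\,\e-\e'}\to 0$ as $p\to0$; hence the probability tends to $0$ along $m\to\infty$, giving $\P_p\{\text{such a path exists for all }m\}=0$. A union bound over the countably many vertices $v$ (using translation invariance, it suffices to kill the event at a single $v$ and then invoke the $0$–$1$ dichotomy as in Theorem~\ref{box}) then forces $\P_p\big(\cup_{v}\{\mu_\a(S_v(\o))=1\}\big)=0$.

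The step I expect to be the main obstacle is the bookkeeping that turns ``$\mu_\a(S_v(\o))=1$'' into ``for all large $m$ there is an occupied path realizing a prefix with $\gtrsim \a m$ ones.'' One must argue that $\mu_\a$-almost-sure visibility implies visibility of \emph{some} word whose prefixes have near-typical density of $1$'s — this is where the law of large numbers for $\mu_\a$ enters: the set of words whose length-$m$ prefix has fewer than $(\a-\e')m$ ones has $\mu_\a$-measure $\to0$, so if $\mu_\a(S_v(\o))=1$ then $S_v(\o)$ must meet the complementary (overwhelming-measure) set, and a word there has the desired prefix density for all large $m$ simultaneously (again by the strong law). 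Making this uniform in $m$ and independent of $\o$ — so that it can be combined with the first-moment bound above, which is a statement for fixed $m$ — requires a little care with the order of quantifiers, but no new idea beyond Borel–Cantelli. Once that reduction is in place, the counting bound is exactly the one in Lemma~\ref{sub} with $p$ replaced by $p^{\a-\e'}$, and the conclusion is immediate.
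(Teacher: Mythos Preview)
Your proposal is correct and follows the paper's route: a first-moment path-counting bound combined with the restriction to words whose prefixes have $1$-density close to $\a$, followed by the $0$--$1$ dichotomy via translation invariance. The paper carries this out exactly as in your third paragraph, setting $A_{N_0}^\e=\{\xi:|n^{-1}\sum_{i\le n}\xi_i-\a|<\e\ \forall\,n\ge N_0\}$, using the SLLN to get $\mu_\a(A_{N_0}^\e)\to1$, and bounding $\P_p\{S_0(\o)\cap A_{N_0}^\e\neq\emptyset\}$ by $(2dK)^n\cdot 2^n\cdot p^{(\a-\e)n}(1-p)^{(1-\a-\e)n}$, the extra $2^n$ coming from a union over all length-$n$ prefixes. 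Your second-paragraph observation---that $\mu_\a(S_v(\o))=1$ forces \emph{every} positive-$\mu_\a$ cylinder to meet $S_v(\o)$, so one may fix a single deterministic prefix $u(m)$ with about $\a m$ ones and bound only that event---is a genuine streamlining: it eliminates the $2^n$ factor and makes the quantifier bookkeeping you flag as the ``main obstacle'' unnecessary. (Indeed, pushing it further and taking $u(m)=1^m$ collapses the argument to Lemma~\ref{sub}, since $p^{-(\a-\e)}<(2dp)^{-1}$ for small $p$; the paper's formulation, and your choice of $\approx\a m$ ones, is what points at the conjectured sharp scale $p^{-\a}$.)
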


\begin{proof}
Given $\epsilon>0$ and $N_0\in\N$, consider the following subset
of words
$$A_{N_0}^\e=\{\xi\in\Xi ; \Big| \frac{\sum_{i=1}^n\xi_i}{n}-\a
\Big|<\e, \forall n\geq N_0\}.$$ We claim that
$\mu_\a(A_{N_0}^\e)\rightarrow 1$ as $N_0\rightarrow\infty$ . To
see this, note that, for all $N_0$, $A_{N_0}^\epsilon \subset
A_{N_{0}+1}^\epsilon$. This implies that $A_{N_0}^\epsilon
\uparrow
A_\infty^\epsilon=\displaystyle\left(\bigcup_{N_0=1}^\infty
A_{N_0}^\epsilon\right)$ and $\mu_\a(A_{N_0}^\e)\rightarrow \mu_\a
(A_\infty^\epsilon)$ as $N_0\rightarrow\infty$. By the Strong Law
of Large Numbers, for all $\xi$, there exists, $\mu_\alpha -a.s.$,
some $n_0(\xi)\in\N$ such that,
$$\Big|\frac{\sum_{i=1}^n\xi_i}{n}-\a \Big|<\e,\;\;\;\; \forall\; n\geq
n_0.$$
This implies that $\mu_\a(A_\infty^\epsilon)=1$.

On the set $A^\e_{N_0}$, we have \be \label{lln}(\a-\e)n\leq
\sum_{i=1}^n\xi_i\leq (\a+\e)n\ee for all $n\geq N_0$.

Given any $\xi\in\Xi$, we will denote
$\xi^{(n)}=(\xi_1,\dots,\xi_n)$. Then, for any $n\geq N_0$, we
have

\be\{\omega\in\Omega;S_0(\o)\cap A^\e_{N_0}\not=\emptyset\mbox{ on $G_K$ }\}\ \ee 
\[\subset\bigcup_{\cima{\g;|\g|=n}{\xi^{(n)}; \xi\in A^\e_{N_0}}}
\{\omega\in\Omega;\xi^{(n)}\mbox{ is seen in }\o\mbox{ along the
path }\g\mbox{ on }G_K\},
\]
where the union is over all self avoiding paths on $G_K$ of size
$n$ having the origin as it starting point. Hence for all $n\geq N_0$,

\be\P_p\{\omega\in\Omega;S_0(\o)\cap A^\e_{N_0}\not=\emptyset \mbox{ on $G_K$ }\}\leq \sum_{\cima{\g;|\g|=n} {\xi^{(n)}; \xi\in A^\e_{N_0}}}
p^{\sum_{i=1}^n\xi_i}(1-p)^{n-\sum_{i=1}^n\xi_i}. \ee

Using (\ref{lln}), we have that for all $n\geq N_0$
\be
\P_p\{\omega\in\Omega;S_0(\o)\cap A^\e_{N_0}\not=\emptyset\mbox{ on $G_K$ }\} \leq
(2dK)^n2^np^{(\a-\e)n}(1-p)^{n-(\a+\e)n}. \ee

Thus, taking $K<(2dp^{\a-\e})^{-1}$ and observing that $(1-p)^{\a+\e-1}<2$ for sufficiently small $p$, we have  $4dKp^{\a-\e}(1-p)^{1-\a-\e}<1$, and so

\be\label{wordsnotseen}\P_p\{\omega \in
\Omega;\,(A^{\epsilon}_{N_0}\cap S_0(\o))=\emptyset\mbox{ on
}G_K\}=1,\,\,\,\forall\,N_0\in \N.\ee

Now, we claim that
$$\P_p\{\omega \in \Omega;\,(A_{\infty}^\epsilon\cap
S_0(\o))=\emptyset\mbox{ on }G_K\}=1.$$ To see this,
let
$$Z_{N_0}=\{\omega \in \Omega;\,(A_{N_0}^\epsilon \cap
S_0(\omega))=\emptyset\mbox{ on }G_K\},$$ and observe that
$\{Z_{N_0}\}_{N_0\geq1}$ is a decreasing sequence. This implies
that $Z_{N_0}\downarrow \displaystyle\bigcap_{N_0=1}^{\infty}
Z_{N_0}=Z_\infty$ and
$$\P_p(Z_\infty)=\displaystyle\lim_{N_0 \rightarrow \infty}\P_p(Z_{N_0})=1.$$
It remains to show that $Z_\infty=\{\omega \in
\Omega;\,(A_\infty^\epsilon \cap S_0(\omega))=\emptyset\mbox{ on
}G_K\}$. But this follows from the implications

\[\omega \in \displaystyle\bigcap_{N_0=1}^{\infty} Z_{N_0}\Leftrightarrow \forall\,
N_0 \in \N,\,A^{\epsilon}_{N_0}\cap S_0(\omega)= \emptyset\]
\[\Leftrightarrow\displaystyle\bigcup_{N_0=1}^\infty(A^{\epsilon}_{N_0}\cap
S_0(\omega))=\emptyset \Leftrightarrow A^{\epsilon}_\infty \cap
S_0(\omega)=\emptyset.\]

This implies that
$$\P_p\{\omega \in \Omega;\,(A_\infty^\epsilon\cap S_0(\o))=\emptyset\mbox{
on }G_K\}=1.$$ As $\mu_\a(A_\infty^\epsilon)=1$, we conclude that
$$\P_p\{\omega \in \Omega;\,\mu_\a(S_0(\o))=1\mbox{
on }G_K\}=0,$$ or equivalently
$$\P_p\{\omega \in \Omega;\exists v\in\V\mbox{ with }
\mu_\a(S_v(\omega))=1)\mbox{ on }G_K\}=0.$$

\end{proof}

\section{Final Remarks}\label{sfinal}

As a straightfoward corollary of Lemmas \ref{super} and \ref{sub}, we obtain the precise scaling behavior of the truncation constant $K(p)$ as $p$ goes to zero for ordinary percolation.

\begin{corolario}\label{111} There exists a constant $\l_0\in\left(\frac{1}{2d},-2\ln(1-p_c(\Z^d))\right)$ such that if $K(p)=\lfloor\frac{\l}{p}\rfloor$, it holds that
\begin{equation}
\lim_{p\ra 0}\P_p\{\omega\in\Omega; (1,1,\dots,1,\dots )\mbox{ is seen in }\omega\mbox{  on }G_K\}=\left\{
\begin{array}
[c]{l}
0 \mbox{  if } \l<\l_0\,\\
1 \mbox{  if } \l>\l_0.
\end{array} \right.
\end{equation}
\end{corolario}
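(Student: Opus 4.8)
The plan is to deduce Corollary \ref{111} directly from Lemmas \ref{super} and \ref{sub}, noting that seeing the single word $\bar 1 = (1,1,\dots)$ is the special case $\xi = \bar 1$ of the constructions used there, but with one simplification: since we only need to realize a single word whose digits are all $1$, there is no need to place full \emph{seeds} (an occupied center surrounded by vacant neighbours). We only need a long line segment in each good box containing at least one occupied site, so the relevant probability estimate improves, and this is exactly why the lower bound on $\l_0$ can be sharpened from $-3\ln(1-p_c(\Z^d))$ in Lemma \ref{super} to $-2\ln(1-p_c(\Z^d))$ here.

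First I would treat the lower (subcritical) side. Lemma \ref{sub} already shows that if $K(p) = \lfloor \l/p\rfloor$ with $\l < (2d)^{-1}$, then $\P_p(\bar 1 \in S_0(\omega)\text{ on }G_K) = 0$ in the limit — indeed the proof of Lemma \ref{sub} is phrased precisely in terms of the word $\bar 1$. So for $\l < (2d)^{-1}$ the limiting probability is $0$, which handles all $\l$ below the eventual $\l_0$ provided $\l_0 \geq (2d)^{-1}$. Next, by translation invariance the event $\{\bar 1 \text{ is seen on } G_K\}$ has probability $0$ or $1$, and it is increasing in $\l$ (larger $K$ means more edges), so there is a well-defined threshold $\l_0$, and $\lim_{p\to 0}\P_p(\cdots)$ takes value $0$ for $\l < \l_0$ and $1$ for $\l > \l_0$ (after checking the limit itself is monotone in $\l$, which follows since for each fixed $p$ the probability is monotone in $K$ hence in $\l$). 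It remains to bound $\l_0$ from above by $-2\ln(1-p_c(\Z^d))$.

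For the upper bound I would rerun the renormalization argument of Lemma \ref{super}, but replacing seeds by occupied sites. Fix $\l > -2\ln(1-p_c(\Z^d))$ and set $n = \lfloor \l/p\rfloor$, $K(p) = 2\lfloor\l/p\rfloor$. Since $1 - (1-p)^{\lfloor \l/(2p)\rfloor} \to 1 - e^{-\l/2} > p_c(\Z^d)$ as $p\to 0$, one can choose $p^\ast$ so that for $p < p^\ast$ the probability that a line segment of length $\lfloor n/2\rfloor \geq \lfloor \l/(2p)\rfloor$ contains an occupied site exceeds some $\gamma > p_c(\Z^d)$. Then I would carry out the same dynamic exploration as in Lemma \ref{super}: partition $\Z^d$ into boxes $\L_x(n)$, and for each box reached from an occupied site $z(y_i)$ in an adjacent box, declare $R_{x_i} = 1$ if the line through $z(y_i)$ in the direction $x_i - y_i$ contains an occupied site within $\L_{x_i}$; choose such a site and call it $z(x_i)$. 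Because occupancy of distinct sites along a line is independent, $(R_x)$ stochastically dominates an i.i.d.\ Bernoulli field with parameter $> p_c(\Z^d)$; by Lemma 1 of \cite{GM} the set of boxes reached is infinite with positive probability, and on that event the occupied centers $z(w_0), z(w_1),\dots$ lie consecutively on lines, consecutive ones at $\ell_1$-distance at most $2n - 1 \leq K(p)$, giving an infinite occupied self-avoiding path on $G_K$ from (a neighbour of) the origin — i.e.\ $\bar 1$ is seen. Hence $\lim_{p\to 0}\P_p(\bar 1 \text{ seen on }G_K) = 1$ for such $\l$, whence $\l_0 \leq -2\ln(1-p_c(\Z^d))$. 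Combined with $\l_0 \geq (2d)^{-1}$ this gives $\l_0 \in ((2d)^{-1}, -2\ln(1-p_c(\Z^d)))$.

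I do not expect a serious obstacle here — the content is a routine specialization of the two lemmas. The one point requiring a little care is the bookkeeping that turns "the word $\bar 1$ uses only occupied sites, so seeds are unnecessary" into the improved constant: one must verify that the exponent in the estimate $\P_p(R_{x_i} = 1 \mid \text{past}) \geq 1 - (1-p)^{\lfloor n/2 \rfloor}$ is $\lfloor n/2\rfloor \sim \l/(2p)$ rather than $\lfloor n/3\rfloor$, which is what converts the $\frac13$ (and hence the factor $3$ in front of the logarithm in Lemma \ref{super}) into $\frac12$ and the factor $2$. The reason one gets $n/2$ rather than $n$ is the same as in Theorem \ref{box}: the path enters the box at a vertex that may be near the middle of the relevant line, so only roughly half the line of length $n$ is guaranteed usable, and symmetrically for the exit; this also explains why $K(p) = 2\lfloor\l/p\rfloor$ is the natural truncation length. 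Once this constant-chasing is pinned down, the $0$–$1$ law and monotonicity arguments are immediate and the corollary follows.
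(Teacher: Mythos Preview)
Your approach is the same as the paper's: invoke Lemma~\ref{sub} for the lower bound, modify the dynamic renormalization of Lemma~\ref{super} by replacing seeds with single occupied sites for the upper bound, and combine with monotonicity in $\l$ and the $0$--$1$ law. The one slip is in the constant bookkeeping. You set $K(p)=2\lfloor\l/p\rfloor$, whereas the corollary is stated for $K(p)=\lfloor\l/p\rfloor$, and your justification that only $\lfloor n/2\rfloor$ of the sites on the line in the target box are ``usable'' is not correct: in the construction of Lemma~\ref{super} the segment $T_{x_i}$ lies entirely inside $\L_{x_i}(n)$ and all $n$ of its sites are available and independent (no spacing is needed once seeds are replaced by single occupied sites). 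The clean way to obtain the bound $-2\ln(1-p_c(\Z^d))$ is to choose box side $n$ with $2n-1\le K(p)=\lfloor\l/p\rfloor$, so $n\sim\l/(2p)$, and then $\P_p(R_{x_i}=1\mid\text{past})\ge 1-(1-p)^n\to 1-e^{-\l/2}$, which exceeds $p_c(\Z^d)$ precisely when $\l>-2\ln(1-p_c(\Z^d))$. In other words, the factor $2$ comes from $K\approx 2n$ (two adjacent boxes of side $n$), not from halving the usable line; with that correction your argument is fine.
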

\begin{proof}It is enough to observe that $$\P_p\{\omega\in\Omega; (1,1,\dots,1,\dots )\mbox{ is seen in }\omega\mbox{  on }G_{\lfloor\frac{\l}{p}\rfloor}\}$$ is increasing in $\l$ and must be 0 or 1, by translation invariance. Lemma \ref{sub} says that $\l_0>\frac{1}{2d}$ and with a simple modification of the proof of Lemma \ref{super} we can show that $$\l_0<-\ln(1-p_c(\Z^d)).$$ In \cite{K}, it is shown that $\lim_{d\ra\infty} 2dp_c(\Z^d)=1$. Therefore this constant $\l_0$ must be such that $\frac{1}{2}<d\l_0<1$ when $d$ is large. 
\end{proof}

Related to the comment above, a natural question one could ask is:

{\bf  Question}\ \  Determine the asymptotic behaviour, in the dimension $d$, of $\l_0$ in Theorem \ref{fss}.

As we mentionned before, it is an open question if for the events $\{\mu_\alpha(S_\infty)=1\})$ and $\cup_{v\in\V}\{\mu_\alpha(S_v)=1\}$ there are  precise results like Theorem \ref{fss}. As a matter of fact, we believe that:

{\bf Conjecture}\ \  For any $\e>0$, let $K(p)=\lfloor\frac{1}{p^{\a+\e}}\rfloor$, then 
$$\lim_{p\ra 0} \P_p\left(\cup_{v\in\V}\{\omega \in \Omega;\mu_\a(S_v(\omega))=1)\mbox{ on }G_K\}\right)=1.$$

This would not, however, answer completely the precise scaling behaviour of the event above. Indeed, one could expect the following:

{\bf Conjecture}\ \  There is a $\l_0\in (0,\infty)$ such that, if $K(p)=\lfloor\frac{\l}{p^\a}\rfloor$, the following limit holds

$$\lim_{p\ra 0} \P_p\left(\cup_{v\in\V}\{\omega \in \Omega;\mu_\a(S_v(\omega))=1)\mbox{ on }G_K\}\right)=\left\{
\begin{array}
[c]{l}
0,\mbox{  if } \l<\l_0\,\\
1,\mbox{  if } \l>\l_0
\end{array}\right.
.$$

Of course, the first conjecture is implied by the latter. Related to the conjectures above, one could ask the following:

{\bf Question}\ \  Is the threshold scaling for the event 
$\{\omega\in\Omega ; \mu_\alpha
(S_\infty(\omega))=1\mbox{ on }G_K\}$, the same as for the event 
$\{\omega \in \Omega;\exists v\in\V\mbox{ with }
\mu_\a(S_v(\omega))=1)\mbox{ on }G_K\}$ or is it strictly smaller?

%Indeed we can formulate a more general question. For all $p\in(0,\frac{1}{2}]$, define the constants $K_i(p),\ i=1,2,3$ as
%$$K_1(p)=\min\{k; \P_p(\mu_\alpha(S_\infty)=1\mbox{ on }G_k)\},$$
%$$K_2(p)=\min\{k; p_c(G_k)<p\},$$
%$$K_3(p)=\min\{k; \P_p(S_\infty=\Xi\mbox{ on }G_k)\}.$$
 
%Then, $K_1\leq K_2\leq K_3$ (in the first inequality we are using Lemma 2 in \cite{L} and $p<\frac{1}{2}$). We can ask if one of these inequalities is strict or not. Even if these inequalities are strict, another problem is to show that $CK_1(p)>K_2(p)$ for some constant $C>1$ and all $p\in(0,\frac{1}{2}]$, proving a result like Theorem \ref{fss} for the event $(\mu_\alpha(S_\infty)=1\mbox{ on }G_k)$. 

{\bf Acknowledgments.} During the preparation of this manuscript authors exchange correspondence with G. Grimmett who also discovered proof of Theorem \ref{box}, we thank him for comments. B.N.B.de Lima is partially supported by
CNPq and FAPEMIG (Programa Pesquisador Mineiro), R. Sanchis is
partially supported by CNPq and R.W.C.Silva is partially supported
by FAPEMIG.

\end{document}